\newtheorem{theorem}{Theorem}[section]
\newtheorem{lemma}[theorem]{Lemma}
\newtheorem{definition}[theorem]{Definition}
\newtheorem{question}[theorem]{Question}
\theoremstyle{definition}
\begin{document}

\title{Equivariant topological slice disks and negative amphichiral knots} 

\author{Keegan Boyle}  
\address{Department of Mathematics, University of British Columbia, Canada} 
\email{kboyle@math.ubc.ca}

\author{Wenzhao Chen}
\address{Department of Mathematics, University of British Columbia, Canada}
\email{chenwzhao@math.ubc.ca}

\setcounter{section}{0}

\begin{abstract}
We show that any strongly negative amphichiral knot with a trivial Alexander polynomial is equivariantly topologically slice.
\end{abstract}
\maketitle
\section{Introduction}

In the 1980s, Freedman proved that any knot with a trivial Alexander polynomial is topologically slice \cite{MR804721,MR1201584}, an instrumental theorem in producing topologically but not smoothly slice knots. Apart from distinguishing topological and smooth sliceness, topologically but not smoothly slice knots also imply the existence of exotic smooth structures on $\mathbb{R}^4$ \cite[Lemma 1.1]{MR816673}. In addition, Freedman's proof implies that any slice disk $D$ with $\pi_1(B^4 - D) = \mathbb{Z}$ is unique up to topological isotopy (see for example \cite[Theorem 1.2]{conway2019enumerating}), which has been used in producing disks in $B^4$ which are topologically but not smoothly isotopic relative to their shared boundary \cite{Haydenexotic,dai2022equivariant}.

It is natural to ask for analogous results which guarantee the existence of an equivariant topological slice disk bounded by a symmetric knot. In the case of strongly invertible knots, Issa and the first author showed that there are knots with trivial Alexander polynomial which do not bound any equivariant topological slice disk \cite[Example 4.8]{BoyleIssa2}; see Question \ref{question:stronglyinvertible}. In contrast, for strongly negative amphichiral knots we show that the topological slice disk implied by the trivial Alexander polynomial condition respects the strongly negative amphichiral symmetry.

\begin{definition}
A \emph{strongly negative amphichiral knot} $(K,\rho)$ is a knot $K \subset S^3$ and an involution $\rho:S^3 \to S^3$ such that $\rho(K) = K$, and the fixed-point set of $\rho$ is two points on $K$. 
\end{definition}
An involution with exactly two fixed points in $S^3$ is conjugate to point reflection \cite{MR111044,MR155323}, so a strongly negative amphichiral knot always has a symmetric diagram as seen in Figure \ref{fig:12a_435}. 

\begin{theorem}\label{thm:alex1}
Let $(K,\rho)$ be a strongly negative amphichiral knot with Alexander polynomial $\Delta_K(t) = 1$. Then $(K,\rho)$ is equivariantly topologically slice with a slice disk $D$ such that $\pi_1(B^4\backslash D)\cong \mathbb{Z}$.
\end{theorem}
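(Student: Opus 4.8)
The plan is to combine Freedman's existence theorem with the uniqueness of the slice disk it produces, and then to average over the involution. Since $\Delta_K(t)=1$, Freedman's theorem gives a locally flat slice disk $D_0\subset B^4$ for $K$ with $\pi_1(B^4\setminus D_0)\cong\mathbb{Z}$. Extend $\rho$ to a locally linear involution $\hat\rho$ of $B^4$ (this exists because $\rho$ is conjugate to a linear involution of $S^3\subset\mathbb{R}^4$), so that $\mathrm{Fix}(\hat\rho)$ is an unknotted arc $A$ joining the two fixed points $p,q$ of $\rho$ on $K$. Then $\hat\rho(D_0)$ is again a locally flat slice disk for $K$, and $\pi_1(B^4\setminus\hat\rho(D_0))=\hat\rho_*(\mathbb{Z})=\mathbb{Z}$, so by the uniqueness of such disks (\cite[Theorem 1.2]{conway2019enumerating}) there is an ambient isotopy $g_t$ of $B^4$ fixing $\partial B^4$ with $g_0=\mathrm{id}$ and $g_1(D_0)=\hat\rho(D_0)$.

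Now set $h=\hat\rho\circ g_1$. Then $h(D_0)=D_0$ and $h|_{S^3}=\rho$, so $h$ extends $\rho$ and preserves $D_0$; it need not, however, be an involution, since its defect $h^2$ is a self-homeomorphism of the pair $(B^4,D_0)$ restricting to the identity on $S^3$ with no evident reason to be isotopic to the identity. The heart of the proof is to upgrade $h$ to an \emph{honest} (and locally linear) involution preserving a possibly isotoped slice disk --- equivalently, to produce a fixed point of the $\mathbb{Z}/2$-action $f\mapsto f^{-1}$ on the space of self-homeomorphisms of $(B^4,D_0)$ that restrict to $\rho$ on $\partial B^4$.

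This promotion from a homotopy-theoretic symmetry to a genuine one is the step I expect to be the main obstacle. It is governed by an obstruction class comparing fixed points with homotopy fixed points of the above $\mathbb{Z}/2$-action, carried by $\pi_0$ and $\pi_1$ of the homeomorphism group of the disk exterior $X=B^4\setminus\mathring{N}(D_0)$ rel boundary --- equivalently, by the low homotopy groups of the space of slice disks of $K$, so that what is ultimately needed is a parametrized refinement of the uniqueness theorem. The favorable structure is that $X$ is a compact $4$-manifold with $H_*(X)\cong H_*(S^1)$, $\pi_1(X)\cong\mathbb{Z}$ and $\partial X=S^3_0(K)$, and that the fundamental group governing the equivariant surgery problem (an extension of $\mathbb{Z}/2$ by $\mathbb{Z}$, hence virtually $\mathbb{Z}$ and so good in the sense of Freedman--Quinn) is one for which the topological disk embedding and $s$-cobordism theorems apply. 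Concretely I would push $A$ onto $D_0$, track its position through the isotopies, and use these tools to kill the obstruction. An alternative that sidesteps the averaging step altogether is to equivariantize Freedman's construction directly: begin with a $\rho$-symmetric spanning surface for $K$, push it into $B^4$, and surger it to a disk by applying the disk embedding theorem $\rho$-equivariantly, using again that the relevant quotient fundamental group is good and that the intersection data to be cancelled can be arranged symmetrically because $\Delta_K=1$.

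Once a locally linear involution $\bar\rho$ of $B^4$ extending $\rho$ and preserving a slice disk $D$ (isotopic to $D_0$, so with $\pi_1(B^4\setminus D)\cong\mathbb{Z}$) is obtained, $\mathrm{Fix}(\bar\rho)$ is an arc from $p$ to $q$ and $\bar\rho|_D$ is an involution of $D\cong D^2$ with fixed set $D\cap\mathrm{Fix}(\bar\rho)$, an arc from $p$ to $q$. This exhibits $(K,\rho)$ as equivariantly topologically slice in the required sense, completing the proof.
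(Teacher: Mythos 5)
Both routes you sketch contain genuine gaps, and neither matches the paper's strategy. In the averaging approach, after producing $h=\hat\rho\circ g_1$ with $h(D_0)=D_0$ and $h|_{S^3}=\rho$, the crucial step is to replace $h$ by an honest locally linear involution preserving a (possibly isotoped) disk. You correctly flag this as the main obstacle, but you do not close it: the discussion of ``fixed points versus homotopy fixed points'' on the homeomorphism group of the exterior is a heuristic framing, not an argument, and there is no known version of the $s$-cobordism or disk embedding theorem that directly delivers such a rigidification for a $\mathbb{Z}/2$-action. Promoting a self-homeomorphism of a pair whose square is isotopic to the identity into a genuine involution is exactly the kind of Nielsen-realization/Smith-theory problem that is generally open in dimension four, so this cannot be treated as a technicality. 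Your alternative ``equivariantize Freedman directly'' route fails at the first step: you propose to start from a $\rho$-symmetric spanning surface for $K$, but strongly negative amphichiral knots do not admit equivariant Seifert surfaces (the symmetry is orientation-reversing on $S^3$), a point the paper explicitly calls out as the reason it cannot follow the Seifert-surface approach of Garoufalidis--Teichner.

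The paper instead works entirely downstairs: it forms the quotient $Q=S^3_0(K)/\rho$ of the $0$-surgery by the free involution induced by $\rho$, shows that $\Delta_K=1$ forces $\operatorname{Arf}(K)=0$ so that $[Q]\in\Omega_3^{Pin^-}(S^1)$ vanishes, and then runs surgery theory on the resulting nonorientable $Pin^-$ null-bordism to produce a homotopy $S^1$ with boundary $Q$. This requires the L-group $L_4(\mathbb{Z}[\mathbb{Z}^-])\cong\mathbb{Z}/2$ (killed by a connected sum with $E_8$ if needed), the nondegeneracy of the equivariant intersection form (which uses $\Delta_K=1$ a second time via $H_1(\widetilde{Q})=0$), and Freedman's sphere embedding theorem for the good group $\mathbb{Z}$. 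The orientation double cover of the resulting homotopy $S^1$ is the equivariant slice-disk exterior, and the deck involution extends across an equivariantly attached $2$-handle along a $\rho$-invariant meridian. If you want to salvage your outline, the key reorientation is to abandon both the averaging step and the equivariant Seifert surface, and instead to do nonequivariant surgery on a nonorientable $4$-manifold bounding the quotient $Q$, then take the double cover at the end.
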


\begin{figure}
\begin{overpic}[width=.5\linewidth, grid = false]{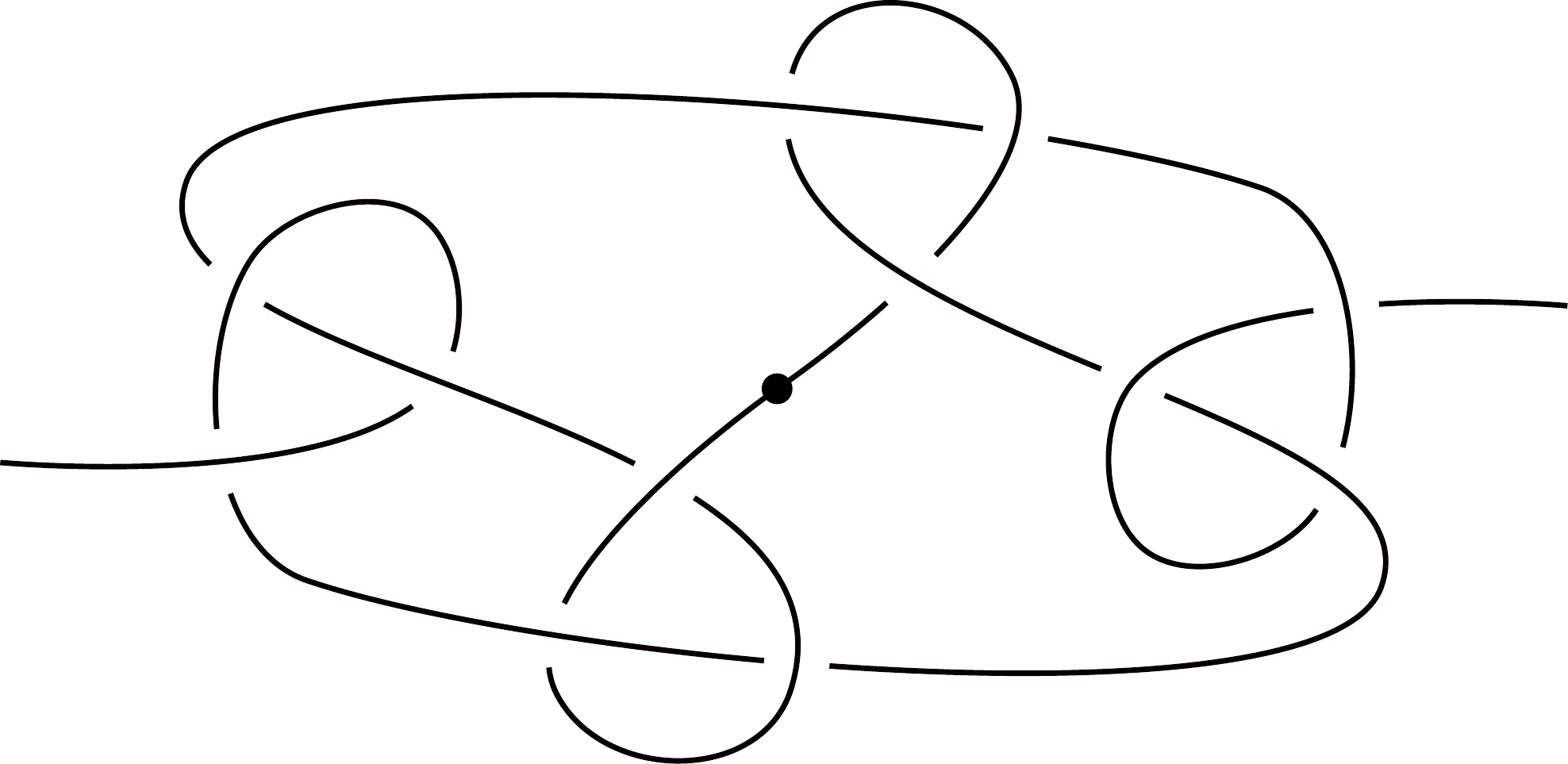}
\end{overpic} 
\caption{A strongly negative amphichiral knot. The symmetry is given by point reflection $(x,y,z) \mapsto (-x,-y,-z)$ across the indicated point $(0,0,0)$, and the two ends connect at infinity (the other fixed point).}
\label{fig:12a_435}
\end{figure}

One motivation for this theorem is the existence of the \emph{half-Alexander polynomial} \cite[Definition 4.1]{boyle2022negative}, which acts in many ways as an equivariant analog of the Alexander polynomial for strongly negative amphichiral knots. For example, it satisfies a version of the Fox-Milnor condition to obstruct equivariant sliceness \cite[Lemma 3.6]{MR433474} and it satisfies an equivariant skein relation analogous to the Conway polynomial \cite[Theorem 1.1]{boyle2022negative}. Hence one might expect that a strongly negative amphichiral knot with a trivial half-Alexander polynomial is equivariantly topologically slice. Since knots with a trivial half-Alexander polynomial have a trivial Alexander polynomial, this would imply Theorem \ref{thm:alex1}.

We know of two approaches to proving that a knot with trivial Alexander polynomial is topologically slice: Freedman's original approach using surgery theory, and the approach of ambient surgeries on Seifert surfaces used by Garoufalidis and Teichner \cite{MR2153483}. Our proof of Theorem \ref{thm:alex1} follows a simplified version of the surgery theory approach (see \cite[Theorem 1.14]{DISCEMBEDDING}) since strongly negative amphichiral knots do not admit equivariant Seifert surfaces. The main difficulty in our setting is the orientation-reversing nature of the symmetry. In particular, we must work with non-orientable 4-manifolds. For an overview of the techniques involved, see Section \ref{sec:background}.

\subsection{Examples}
We indicate some constructions of strongly negative amphichiral knots with trivial Alexander polynomial. One such knot was constructed by Van Buskirk in \cite[Example 2]{MR715764}. An infinite family of examples can also be constructed via a modification of the construction used in the proof of \cite[Theorem 6.1]{boyle2022negative}. Specifically, in \cite[Figure 13]{boyle2022negative} choose any values of $b_i$ and $c_i$, and replace the $+1$ surgery curve with its positive Whitehead double and the $-1$ surgery curve with its negative Whitehead double. The result is a surgery diagram for a strongly negative amphichiral knot with trivial half-Alexander polynomial (which can be computed as in \cite[Section 5]{boyle2022negative}), and hence trivial Alexander polynomial. See Figure \ref{fig:example1} for the knot produced from this construction when all parameters are $0$ except $c_1 = 1$. Examples can also be produced through equivariant satellite operations, provided that both the companion and pattern knots have trivial Alexander polynomials.

\begin{figure}
\begin{overpic}[width=.5\linewidth, grid = false]{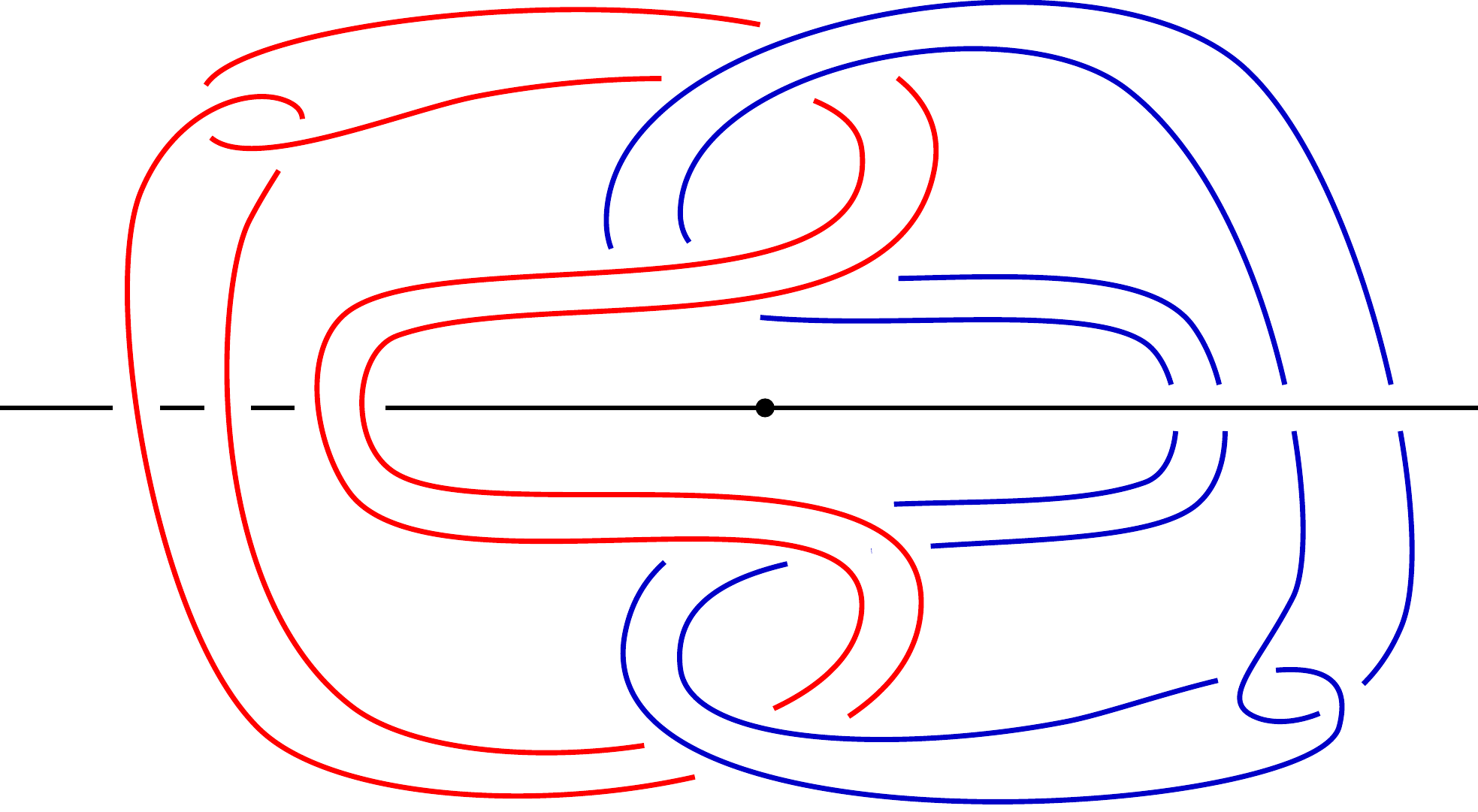}
\put (8, 4) {$+1$}
\put (90,45) {$-1$}
\put (-5,26) {$K$}
\end{overpic} 
\begin{overpic}[width=.4\linewidth, grid = false]{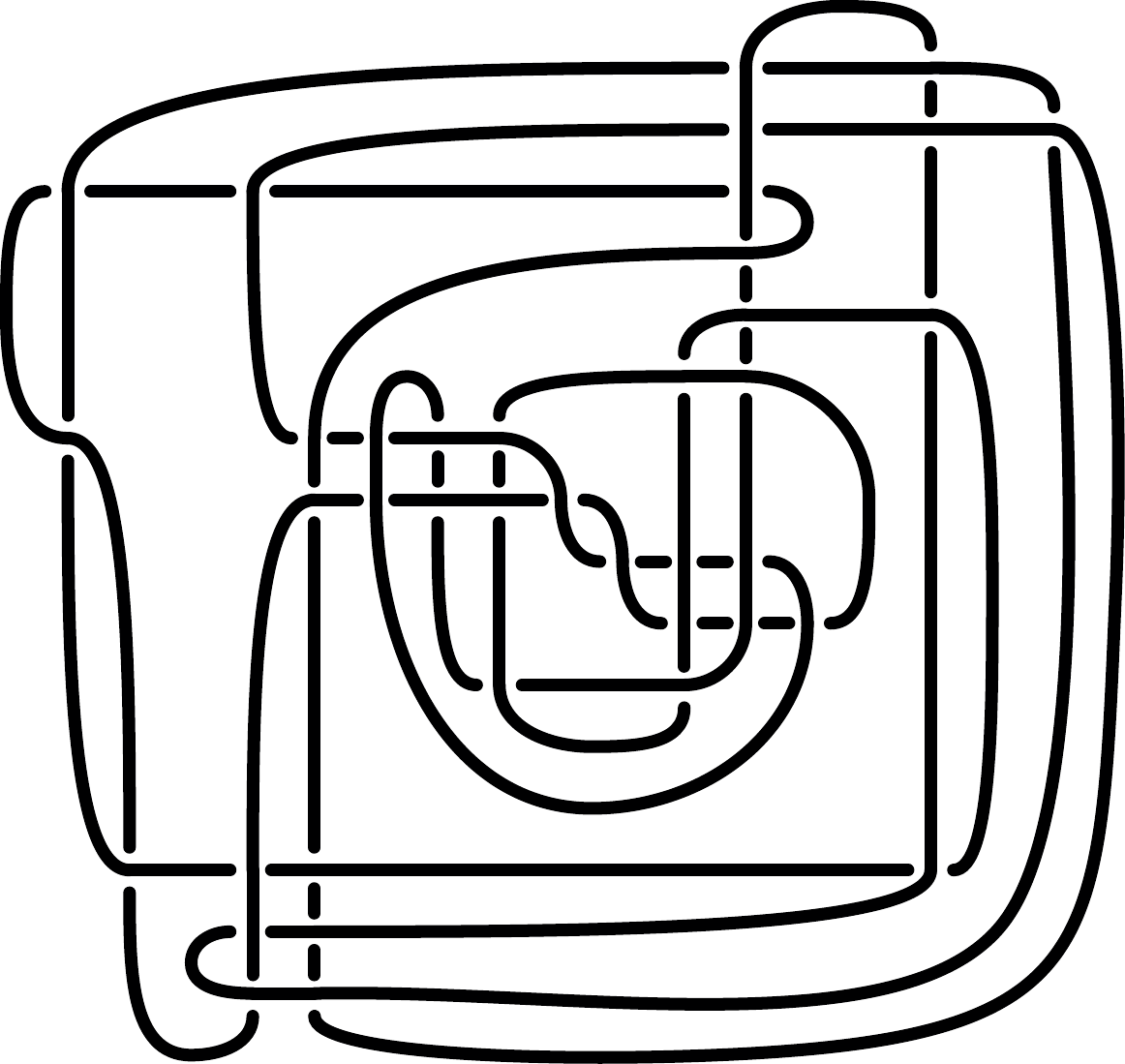}

\end{overpic}

\caption{A strongly negative amphichiral knot $K$ with trivial Alexander polynomial. The diagram on the left is an equivariant surgery diagram for $K$. Blowing down the surgery curves produces a knot diagram for $K$ which we simplified using KLO \cite{KLO} and SnapPy \cite{SnapPy} to produce the diagram on the right.}
\label{fig:example1}
\end{figure}

\subsection{Open questions}
We conclude with some open questions related to equivariant slice disks.

\begin{question}
If $\overline{\rho}: B^4 \to B^4$ is a locally linear order 2 homeomorphism which restricts to a point reflection $\rho$ on $S^3 = \partial B^4$, then is $\overline{\rho}$ conjugate to the standard involution (the cone of $\rho$) in the homeomorphism group of $B^4$?
\end{question}
Implicit in Theorem \ref{thm:alex1} is the construction of many such involutions on $B^4$, so that proving that a strongly negative amphichiral knot with trivial Alexander polynomial is not equivariantly topologically slice with respect to the standard involution would answer this question. On the other hand, all known equivariant smooth slice disks are invariant under the standard involution on $B^4$ (see \cite{BoyleIssa} for some examples).

\begin{question} \label{question:smoothlyslice}
Is there a strongly negative amphichiral knot with trivial Alexander polynomial which is not smoothly equivariantly slice?
\end{question}
Answering this question in the positive would immediately produce a strongly negative amphichiral knot which is topologically but not smoothly equivariantly slice by Theorem \ref{thm:alex1}. In fact, Issa and the first author have produced an obstruction to smooth equivariant slice disks using Donaldson's theorem \cite[Theorem 1.3]{BoyleIssa}. However, this theorem does not apply to any known examples of knots with trivial Alexander polynomial. Question \ref{question:smoothlyslice} is closely related to \cite[Problem 21]{FT}, which asks for an amphichiral knot with a trivial Alexander polynomial which is not smoothly slice.

One may hope to answer Question \ref{question:smoothlyslice} by developing an equivariant analog of knot Floer homology, which may also be helpful for studying the following question (at least in the smooth category).

\begin{question} \label{question:isotopicdisks}
Are there equivariant slice disks for some strongly negative amphichiral knot which are isotopic but not equivariantly isotopic?
\end{question}
We suspect that there are such disks; in the proof of Theorem \ref{thm:alex1} (specifically Lemma \ref{lemma:equivariantmeridian}) there are two choices of $\rho$-invariant meridian to which we may equivariantly attach a handle to construct an equivariant slice disk. These two slice disks are isotopic (since the meridians are isotopic), but we suspect they are not equivariantly isotopic. 

In \cite{MR3523068}, Feller showed that the degree of the Alexander polynomial is an upper bound for the topological slice genus, and it would be interesting to see if there is an equivariant version of this theorem.

\begin{question}
Is the degree of the Alexander polynomial an upper bound on the equivariant topological slice genus for strongly negative amphichiral knots?
\end{question}

Feller's proof relies on a modification of a Seifert surface, and is closely related to the ambient surgery technique used in \cite{MR2153483}. In the setting of strongly negative amphichiral knots, we do not have equivariant Seifert surfaces, and hence Feller's proof cannot be directly applied.

Finally, as indicated by \cite[Example 4.8]{BoyleIssa2}, any statement for strongly invertible knots analogous to Theorem \ref{thm:alex1} must involve more than just the Alexander polynomial. 

\begin{question} \label{question:stronglyinvertible}
Is there a homological condition which guarantees that a strongly invertible knot is equivariantly topologically slice?
\end{question}

Question \ref{question:stronglyinvertible} is closely related to a question about slice links, as we can see from the following construction. Extending a strongly invertible symmetry to the $0$-surgery $S^3_0(K)$ on $K$ produces a fixed-point set consisting of a 2-component link, and the quotient of $S^3_0(K)$ is a homology $S^3$ containing the image $L$ of this 2-component link. Furthermore, $K$ is equivariantly slice if $L$ is freely slice. Indeed, the double branched cover over a pair of freely slice disks for $L$ is a homotopy $S^1$ with boundary $S^3_0(K)$, which is an equivariant slice disk complement for $K$.

\subsection{Organization}
In Section \ref{sec:background} we give background for equivariant slice disks, surgery theory, and $Spin$ and $Pin^-$ structures. In Section \ref{sec:proof} we prove Theorem \ref{thm:alex1}.

\subsection{Acknowledgments}
We would like to thank Liam Watson for some helpful comments, and the authors of \cite{DISCEMBEDDING} for an excellent introduction to Freedman's work. The second author is partially supported by the Pacific Institute for the Mathematical Sciences (PIMS). The research and findings may not reflect those of the institute.

\section{Background} \label{sec:background}
\subsection{Equivariantly slice knots}
We begin by introducing the definition of an equivariant slice disk for a strongly negative amphichiral knot. 

\begin{definition}
A strongly negative amphichiral knot $(K,\rho)$ is \emph{equivariantly (topologically) slice} if there is a locally linear order 2 homeomorphism $\rho'\colon B^4 \to B^4$ and a $\rho'$-invariant locally flat and properly embedded disk $D \subset B^4$ with $\partial D = K$ such that $\rho'$ restricts to $\rho$ on $\partial B^4 = S^3$. Here $D$ is called an \emph{equivariant topological slice disk for $K$}.
\end{definition}

Here we require the involution $\rho'$ to be locally linear in order to avoid pathological fixed-point sets, as is standard for studying group actions on topological manifolds. For convenience, we recall the definition below.

\begin{definition}
An order 2 homeomorphism $\rho$ on an $n$-dimensional topological manifold is \emph{locally linear} if each fixed point $x$ of $\rho$ has a $\rho$-invariant neighborhood which is equivariantly homeomorphic to $\mathbb{R}^n$ with a linear $\mathbb{Z}/2\mathbb{Z}$ action.
\end{definition}

\subsection{A brief overview of surgery theory}

The main technical input in the proof of Theorem \ref{thm:alex1} is surgery theory, which we use to construct the equivariant slice disk complement. Surgery theory provides a method to modify a given manifold $M$, by performing surgery on spheres representing specific homotopy classes in $M$, in order to obtain a new manifold with a prescribed homotopy type. In our case, we will modify a 4-manifold with a prescribed boundary to obtain a homotopy $S^1$. We will perform surgeries on 1- and 2-dimensional spheres to achieve $\pi_1(M) \cong \mathbb{Z}$ and $\pi_2(M) \cong 0$, which is sufficient to guarantee that $M$ is a homotopy $S^1$ (see for example, \cite[Proposition 10.18]{MR2061749}).

In order to perform surgery on a sphere, one necessary condition is that the sphere must be framed; that is the normal bundle is trivial. A standard way of guaranteeing that the spheres are framed is to work with manifolds with extra bundle data, usually packaged in a notion called normal maps. For simplicity, we will avoid normal maps and instead obtain framed spheres by taking advantage of working in low-dimensions: a 1-sphere is framed if and only if its normal bundle is orientable, and a 2-sphere is homotopic to a framed sphere if and only if its self-intersection number is even. Even self-intersection numbers can be guaranteed provided the 4-manifold admits a $Pin^-$ structure. See Section \ref{sec:pin} for a discussion on $Pin^-$ structures.   

In order to perform surgery on a sphere, the sphere must also be embedded. For 1-spheres in a 4-manifold, this is automatic since every 1-sphere is homotopic to an embedded one by Whitney's embedding theorem. 

The situation for 2-spheres is more complicated. Even if a 2-sphere is embedded, surgery on this sphere may alter $\pi_1(M)$ since the meridian of the 2-sphere might be nontrivial in the fundamental group of the new manifold. To avoid this, it is desirable to ask for a geometrically dual sphere $S'$ to the sphere $S$ on which the surgery is performed. Here, $S'$ (not necessarily embedded) and $S$ intersect geometrically once and the presence of $S'$ provides a null-homotopy of the meridian of $S$ in the manifold obtained from surgery. Hence we would like to find a basis of $\pi_2(M)$ such that half of the basis is represented by disjoint, framed, and embedded 2-spheres, each of which admits a geometric dual sphere. Here we require the geometric dual spheres to form the other half of the basis. Then surgery on these embedded spheres will annihilate $\pi_2(M)$. 

The algebraic obstruction to the existence of such a basis is known as the surgery obstruction; this is the Witt class of a certain quadratic form on $\pi_2(M)$, which is an element of Wall's obstruction group $L_4(\mathbb{Z}[\pi_1(M)]^w)$. (We refer the reader to \cite[Definition 4.22 and Definition 4.28]{MR1937016} or \cite{MR1687388} for the definitions.) This group only depends on $\pi_1(M)$ and the orientation character $w\colon\pi_1(M) \to \mathbb{Z}/2\mathbb{Z}$. If the surgery obstruction vanishes, and the fundamental group is good, then Freedman's sphere embedding theorem produces a collection of spheres on which one can perform surgeries to annihilate $\pi_2(M)$ without modifying $\pi_1(M)$ (see for example \cite[Theorem 5.1A]{MR1201584} or \cite[Chapter 20.3]{DISCEMBEDDING}).

\subsection{\texorpdfstring{$Spin$}{Spin} and \texorpdfstring{$Pin^-$}{Pin} structures} \label{sec:pin}

Recall that a compact oriented 4-manifold $M$ admits a $Spin$ structure if and only if the intersection form is even. We will be considering non-orientable manifolds, which cannot be $Spin$. Instead we use a $Pin^-$ structure, which like a $Spin$ structure guarantees that the self-intersection pairing of elements in $H_2(M;\mathbb{Z}/2\mathbb{Z})$ vanishes. We briefly recall some basic definitions and properties below; see \cite{MR1171915} for details.

A $Spin$ structure on an $SO(n)$ bundle over a manifold is a lift of the $SO(n)$ bundle to a $Spin(n)$ bundle, where $Spin(n)$ is the (degree 2) universal cover of $SO(n)$ for $n \geq 3$. When $n < 3$, a $Spin$-structure on an $SO(n)$ bundle is a $Spin$ structure on some stabilization of the bundle. Indeed a $Spin$ structure on a vector bundle is a stable structure; the set of $Spin$ structures on an $SO(n)$ bundle is in bijective correspondence with the set of $Spin$ structures on each stabilization of the bundle. A $Spin$ structure on an oriented manifold is a $Spin$ structure on its orthonormal frame bundle (the choice of Riemannian metric does not change the isomorphism class of the $SO(n)$ bundle). An oriented manifold $M$ admits a $Spin$ structure if and only if the second Stiefel-Whitney class $w_2(M)$ vanishes, in which case the set of $Spin$ structures has a non-canonical bijection to $H^1(M;\mathbb{Z}/2\mathbb{Z})$.

A $Pin^-$ structure on a (not necessarily orientable) compact manifold $M$ is equivalent to a $Spin$ structure on $TM \oplus \det(TM)$ for an arbitrary orientation on $TM \oplus \det(TM)$. When $M$ is orientable, $\det(TM)$ is trivial so that a $Pin^-$ structure on $M$ is equivalent to a $Spin$-structure on $M$. A manifold $M$ admits a $Pin^-$ structure if and only if $w_2(M) + w_1(M)^2 = 0$ (where $w_i(M)$ is the $i$-th Stiefel-Whitney class), in which case the set of $Pin^-$ structures has a non-canonical bijection to $H^1(M;\mathbb{Z}/2\mathbb{Z})$. 

Next we discuss the extension of a $Pin^-$ structure from a 4-manifold $M$ to a 4-manifold $M'$ obtained from $M$ by surgery along framed 1-spheres. Give a framed embedded 1-sphere $S$ in $M$, there are two choices of framing on $S$ (since $\pi_1(SO(3)) \cong \mathbb{Z}/2\mathbb{Z}$). For one choice of framing, the $Pin^-$ structure on $M$ extends to $M'$, and for the other choice of framing it does not. Indeed, there is a unique $Pin^-$ structure on the 2-handle $h = D^2 \times D^3$ which we attach to $M \times I$, and by attaching $h$ so that the $Pin^-$ structure on $S$ agrees with the $Pin^-$ structure on $h$ we will get a $Pin^-$ structure on the cobordism from $M$ to $M'$ which we can restrict to $M'$.

Finally, we will make use of $Spin$ and $Pin^-$ bordism groups, which we define here for convenience. See \cite{MR0385836} or \cite{MR1171915} for more details.
\begin{definition}
Let $X$ be a topological space. Then the $i$-th $Pin^-$ bordism group $\Omega_i^{Pin^-}(X)$ of $X$ consists of bordism classes of triples $[M,\xi,f]$, where $M$ is an $i$-dimensional manifold with a $Pin^-$ structure $\xi$ and a continuous map $f:M \to X$. Here $(M,\xi,f)$ is equivalent to $(N,\psi,g)$ if there is a $Pin^-$ cobordism $W:M \to N$ and a map $F:W \to X$ such that the $Pin^-$ structure and map $F$ on $W$ restrict to $(\xi,f)$ on $M$ and $(\psi,g)$ on $N$. The group operation is induced by disjoint union.
\end{definition}
The $Spin$ bordism groups $\Omega_i^{Spin}(X)$ are defined similarly. Note that both $\Omega_i^{Pin^-}(X)$ and $\Omega_i^{Spin}(X)$ are generalized homology theories.

\section{Proof of Theorem \ref{thm:alex1}}\label{sec:proof}
In this section we prove Theorem \ref{thm:alex1}. Before we begin the proof, we state some necessary lemmas. To begin, Lemmas \ref{lemma:0involution} and \ref{lemma:pinbordismclass} provide a 4-manifold on which we will perform some surgeries. 
\begin{lemma}\cite[Proof of Lemma 2.3]{MR2554510} \label{lemma:0involution}
The strongly negative amphichiral symmetry $\rho$ induces a free involution on the $0$-surgery $S^3_0(K)$. Furthermore, the quotient $Q = S^3_0(K)/\rho$ has $H^1(Q;\mathbb{Z}) = \mathbb{Z}$.
\end{lemma}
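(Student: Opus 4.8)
The plan is to recall the construction of the free involution on $S^3_0(K)$, which is essentially \cite[Proof of Lemma 2.3]{MR2554510}, and then to compute $H^1(Q;\mathbb{Z})$ by a transfer argument.

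For the involution, since an involution of $S^3$ with exactly two fixed points is conjugate to the point reflection, we may assume $\rho$ is smooth and choose a $\rho$-invariant tubular neighborhood $\nu(K)$. Then $\rho$ restricts to a free involution on the exterior $E(K)=S^3\setminus\mathrm{int}\,\nu(K)$, since both fixed points of $\rho$ lie on $K$. On $\partial E(K)=T^2$ the $0$-framed longitude $\lambda$ is characterized, up to sign, as the primitive class that dies in $H_1(E(K);\mathbb{Z})$, and since $\rho_*$ preserves this kernel we get $\rho_*(\lambda)=\pm\lambda$; this is precisely the condition allowing $\rho|_{\partial E(K)}$ to be extended over the surgery solid torus $V=S^1\times D^2$, whose meridian is glued to $\lambda$. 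I would then check that this extension can be arranged to be a free involution $\hat\rho$ on $S^3_0(K)=E(K)\cup_{T^2}V$: near each fixed point of $\rho$ the local model is $-\mathrm{id}$ on $\mathbb{R}^3$, so on a normal disk to $K$ it is rotation by $\pi$; this forces the induced action on the core of $V$ to be a free rotation by $\pi$, and one extends freeness from the core to all of $V$. This freeness is the one delicate point, and it is exactly where the orientation-reversing local model of $\rho$ is used.

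For the quotient, let $\widetilde Q=S^3_0(K)$ and let $\pi\colon\widetilde Q\to Q$ be the resulting double cover. Since $H^1(Q;\mathbb{Z})$ is automatically torsion-free, it suffices to show $\dim_{\mathbb{Q}}H^1(Q;\mathbb{Q})=1$, and by the transfer $\pi^*\colon H^1(Q;\mathbb{Q})\to H^1(\widetilde Q;\mathbb{Q})$ is injective with image the $\hat\rho^*$-invariant subspace. As $H^1(S^3_0(K);\mathbb{Q})\cong\mathbb{Q}$, everything reduces to checking that $\hat\rho_*$ acts as the identity on $H_1(S^3_0(K);\mathbb{Q})\cong\mathbb{Q}$. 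This group is the image of $H_1(E(K);\mathbb{Q})=\mathbb{Q}\langle\mu\rangle$ under inclusion (a meridian $\mu$ of $K$ still generates), so the action is identified with $\rho_*$ on $H_1(E(K))$; and since $\rho$ reverses the orientation of $S^3$ and of $K$ (an involution of $S^1$ with two fixed points is a reflection), linking numbers give $\mathrm{lk}(\rho(\mu),K)=-\mathrm{lk}(\rho(\mu),\rho(K))=-(-\mathrm{lk}(\mu,K))=\mathrm{lk}(\mu,K)$, so $\rho_*[\mu]=[\mu]$. Hence $\hat\rho^*$ is the identity on $H^1(S^3_0(K);\mathbb{Q})$, so $H^1(Q;\mathbb{Q})\cong\mathbb{Q}$, and therefore $H^1(Q;\mathbb{Z})\cong\mathbb{Z}$. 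I expect the main obstacle to be the freeness of the extended involution $\hat\rho$ over the surgery solid torus, i.e. tracking the orientation-reversing symmetry near the two former fixed points; the cohomology computation is then a routine transfer argument once the (trivial) action of $\hat\rho$ on $H_1(S^3_0(K);\mathbb{Q})$ has been identified.
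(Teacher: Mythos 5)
The paper does not give its own proof of this lemma; it is cited directly from \cite{MR2554510}, so there is no in-paper argument to compare against. Your proof is nevertheless correct. For the first claim, you correctly use the local model $-\mathrm{id}$ on $\mathbb{R}^3$ at a fixed point of $\rho$ on $K$ to see that $\rho$ rotates a meridian disk by $\pi$; since the core of the surgery solid torus $V$ is isotopic to the meridian $\mu$, the extension of $\rho$ over $V$ rotates the $S^1$-factor by $\pi$ and is therefore free. For the second claim, the transfer identifies $H^1(Q;\mathbb{Q})$ with the $\hat\rho^*$-invariants in $H^1(S^3_0(K);\mathbb{Q})\cong\mathbb{Q}$, and your linking-number computation $\mathrm{lk}(\rho(\mu),K)=\mathrm{lk}(\mu,K)$ (using that $\rho$ reverses the orientations of both $S^3$ and $K$) correctly shows $\hat\rho_*[\mu]=[\mu]$, hence $\hat\rho^*=\mathrm{id}$ on $H^1$. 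Since $Q$ is a compact $3$-manifold, $H^1(Q;\mathbb{Z})$ is finitely generated and, by universal coefficients, torsion-free, so $H^1(Q;\mathbb{Q})\cong\mathbb{Q}$ does give $H^1(Q;\mathbb{Z})\cong\mathbb{Z}$ as you claim.
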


Note that $Q$ has two $Pin^-$ structures, since $H^1(Q,\mathbb{Z}/2\mathbb{Z}) = \mathbb{Z}/2\mathbb{Z}$. We will arbitrarily choose one such structure $\xi$ to work with, and let $f:Q \to S^1$ be a map corresponding to a generator of $H^1(Q;\mathbb{Z})$. Note that $[Q,\xi,f]$ is an element of the $Pin^-$ bordism group $\Omega^{Pin^-}_3(S^1)$. 

\begin{lemma} \label{lemma:pinbordismclass}
The bordism class $[Q,\xi,f]$ is trivial in $\Omega^{Pin^-}_3(S^1)$ if and only if Arf$(K) = 0$. Moreover, the null-cobordism $(V_0,f:V_0 \to S^1)$ can be chosen so that for each simple closed curve $\gamma \subset V_0$, if $f_*([\gamma]) = 0 \in H_1(S^1;\mathbb{Z})$ then the normal bundle of $\gamma$ is orientable.
\end{lemma}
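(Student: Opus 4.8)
The plan is to compute $\Omega^{Pin^-}_3(S^1)$, to identify the resulting invariant of $(Q,\xi,f)$ with $\mathrm{Arf}(K)$ by taking a transverse preimage, and then to refine the bordism computation enough to control the first Stiefel--Whitney class of the null-cobordism.

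Since $S^1$ is a suspension there is a splitting $\Omega^{Pin^-}_3(S^1)\cong \Omega^{Pin^-}_3\oplus\widetilde\Omega^{Pin^-}_3(S^1)$ with $\widetilde\Omega^{Pin^-}_3(S^1)\cong\Omega^{Pin^-}_2$, and $\Omega^{Pin^-}_3=0$, $\Omega^{Pin^-}_2\cong\mathbb{Z}/8\mathbb{Z}$ by \cite{MR1171915}; under this, $[Q,\xi,f]$ corresponds to $[F,\xi|_F]\in\Omega^{Pin^-}_2$, where $F=f^{-1}(y)$ for a regular value $y$ and $F$ carries the $Pin^-$ structure coming from the canonical trivialization of its (oriented, trivial) normal bundle. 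To identify $F$: because $Q$ is non-orientable and $H^1(Q;\mathbb{Z}/2\mathbb{Z})=\mathbb{Z}/2\mathbb{Z}=\langle\overline f\rangle$, we have $w_1(Q)=\overline f$, so the orientation double cover $p\colon S^3_0(K)\to Q$ is the pullback of the squaring map $S^1\to S^1$ along $f$; hence $f\circ p=\mathrm{sq}\circ\widetilde f$ and, comparing with the transfer, $\widetilde f$ generates $H^1(S^3_0(K);\mathbb{Z})$. Thus $\widetilde f^{-1}(y)$ is Poincar\'e dual to a generator of $H_2(S^3_0(K);\mathbb{Z})$, so it may be taken to be a capped Seifert surface $\widehat\Sigma$ for $K$, and since $\overline f$ restricts to $0$ on $F$ the cover $p$ restricts to a $Pin^-$-structure-preserving homeomorphism $\widehat\Sigma\xrightarrow{\sim}F$. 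In particular $F$ is orientable, so $\xi|_F$ is a spin structure, and its class in $\Omega^{Pin^-}_2=\mathbb{Z}/8\mathbb{Z}$ is $4$ times the Arf invariant of the associated $\mathbb{Z}/2\mathbb{Z}$-quadratic form; this Arf invariant is $\mathrm{Arf}(K)$ because the spin structure on $\widehat\Sigma$ induced from $S^3_0(K)$ refines the mod $2$ Seifert form. Therefore $[Q,\xi,f]=4\,\mathrm{Arf}(K)$ in $\mathbb{Z}/8\mathbb{Z}$, which vanishes if and only if $\mathrm{Arf}(K)=0$; this proves the first assertion (and incidentally shows the class is independent of the choice of $\xi$, since $\overline f$ restricts trivially to $F$).

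For the ``moreover'', observe that the normal bundle of a simple closed curve $\gamma\subset V_0$ is orientable exactly when $\langle w_1(V_0),[\gamma]\rangle=0$, so -- $Q$ being non-orientable -- it suffices to build a null-cobordism with $w_1(V_0)=f^*c$ for $c$ the generator of $H^1(S^1;\mathbb{Z}/2\mathbb{Z})$: then $\langle w_1(V_0),[\gamma]\rangle=\langle c,f_*[\gamma]\rangle=0$ whenever $f_*[\gamma]=0$. Since $\det TQ\cong f^*\mathcal L$ for the M\"obius bundle $\mathcal L\to S^1$, the structure $\xi$ is a spin structure on $TQ\oplus f^*\mathcal L$, so $(Q,\xi,f)$ refines to a class in the bordism group $\mathcal R_3$ of triples $(M^3,g\colon M\to S^1,\text{spin structure on }TM\oplus g^*\mathcal L)$, and a null-cobordism in $\mathcal R_3$ is precisely a $V_0$ with the required control of $w_1$. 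I would then compute $\mathcal R_3\cong\widetilde\Omega^{Spin}_4(\mathbb{RP}^2)$, $\mathbb{RP}^2$ being the Thom space of $\mathcal L\to S^1$, and use the cofiber sequence $S^1\to\mathbb{RP}^2\to S^2$ -- whose connecting map has degree $2$, hence is zero on $Spin$-bordism in these degrees, and with $\widetilde\Omega^{Spin}_4(S^1)=\Omega^{Spin}_3=0$ -- to identify $\mathcal R_3\xrightarrow{\sim}\widetilde\Omega^{Spin}_4(S^2)=\Omega^{Spin}_2=\mathbb{Z}/2\mathbb{Z}$ via $(M,g,\cdot)\mapsto[g^{-1}(y)]$. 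This is compatible with the preimage map of the first part, so the refined class of $(Q,\xi,f)$ maps to $\mathrm{Arf}(K)$ and therefore vanishes when $\mathrm{Arf}(K)=0$, yielding $V_0$. I expect the main difficulty to be precisely this last step: an arbitrary $Pin^-$ null-cobordism of $(Q,\xi,f)$ cannot in general be repaired by surgery to make $w_1(V_0)=f^*c$, since the loops one would need to surger have non-orientable normal bundle and so admit no framed surgery; one must instead work in the refined bordism theory from the outset and verify that passing to it still detects $\mathrm{Arf}(K)$.
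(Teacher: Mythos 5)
Your first half follows essentially the paper's route: reduce $\Omega^{Pin^-}_3(S^1)$ to $\Omega^{Pin^-}_2\cong\mathbb{Z}/8$ (the paper uses Mayer--Vietoris rather than the suspension splitting, but these are the same reduction), take the preimage $F$ of a regular value, observe $F$ is orientable so $\xi|_F$ lifts to a $Spin$ structure, and identify its Arf invariant with $\mathrm{Arf}(K)$ through the double cover $S^3_0(K)\to Q$. Where you genuinely diverge is the ``moreover.'' The paper constructs the null-cobordism explicitly: since $\mathrm{Arf}(K)=0$, $F$ bounds a $Spin$ handlebody $H$; glue $H\times[-\varepsilon,\varepsilon]$ onto $Q\times[0,1]$ along a tubular neighborhood of $F\times\{1\}$, note that the free boundary component $M$ is orientable (because $F$ is dual to $w_1(Q)$), cap $M$ with a simply connected $Spin$ 4-manifold $W_1$, and verify the $w_1$-control directly using that every loop in $V_0=W_0\cup_M W_1$ is homologous into $Q$. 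You instead refine the bordism theory from the start, computing the twisted group $\mathcal{R}_3\cong\widetilde\Omega^{Spin}_4(\mathbb{RP}^2)\cong\mathbb{Z}/2$ and noting that a null-cobordism there automatically has $w_1(V_0)=f_0^*c$; your cofiber computation checks out, and the compatibility with the $\mathrm{Arf}$-detecting preimage map is the right way to close the argument. Your closing remark is also well taken and points at the real content of this step: one cannot in general surger a $Pin^-$ null-cobordism to fix $w_1$, so one must either refine the bordism group (your route) or build the null-cobordism by hand (the paper's route). Your approach buys conceptual cleanliness -- the $w_1$-condition is built in -- at the cost of Thom-spectrum machinery; the paper's buys elementariness and an explicit, concrete $V_0$.
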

\begin{proof}
First, observe that there is an isomorphism $\Theta\colon\Omega_3^{Pin^-}(S^1) \to \Omega_2^{Pin^-}(*)$ by applying the Meier-Vietoris sequence for generalized homology theories (see for example \cite[Theorem 7.19]{MR0385836}), and using the fact that $\Omega_3^{Pin^-}(*) = 0$ (see for example \cite[Theorem 5.1]{MR1171915}). Let $F = f^{-1}(*)$ for some regular value $*$ of $f$. Then the image $\Theta([Q,\xi,f])$ is represented by $F$ together with the $Pin^-$ structure $\xi|_F$ it inherits from $Q$. The Brown-Arf invariant $\beta$ classifies $\Omega_2^{Pin^-}(*) \cong \mathbb{Z}/8\mathbb{Z}$ (see for example \cite[Lemma 3.6]{MR1171915}), so we need to show that $\beta([F,\xi|_F])$ vanishes if and only if Arf$(K)$ vanishes.

In fact $F$ is orientable, and hence $\xi|_F$ lifts to a $Spin$ structure $\overline{\xi}$ so that $[F,\overline{\xi}]$ is a class in $\Omega_2^{Spin}(*) \cong \mathbb{Z}/2\mathbb{Z}$, which is classified by the Arf invariant Arf$([F,\overline{\xi}])$. Furthermore, by \cite[Proposition 3.8]{MR1171915}, there is a commutative diagram
\[
\begin{tikzcd}
\Omega_2^{Spin}(*) \arrow[hookrightarrow]{r} \arrow{d}{Arf}& \Omega_2^{Pin^-}(*) \arrow{d}{\beta} \\
\mathbb{Z}/2\mathbb{Z} \arrow[hookrightarrow]{r} & \mathbb{Z}/8\mathbb{Z}
\end{tikzcd}
\]
where the map $\mathbb{Z}/2\mathbb{Z} \to \mathbb{Z}/8\mathbb{Z}$ is the unique injective homomorphism. Hence $\beta([F,\xi|_F]) = 0$ if and only if Arf$([F,\overline{\xi}]) = 0$. Now $F$ lifts to a pair of orientable surfaces $F_0$ and $F_1$ in $S^3_0(K)$, and $[F,\overline{\xi}] \cong [F_0,\overline{\eta}]$, where $\overline{\eta}$ is the restriction of the $Spin$ structure on $S^3_0(K)$ to $F_0$. Hence Arf$([F,\overline{\xi}]) = $ Arf$([F_0,\overline{\eta}]) = $ Arf$(K)$.

For the second statement we will construct the null-cobordism with the desired property directly. First, note that by an appropriate isotopy of $f$, we may assume that $F = f^{-1}(*)$ is connected. Since Arf$([F,\overline{\xi}]) = 0$, there is a $Spin$ handlebody $H$ with $\partial H = F$. Now take $Q \times [0,1]$ and glue $H \times [-\varepsilon,\varepsilon]$ to $Q \times \{1\}$ in a neighborhood $U$ of $F \times \{1\}$. Indeed, $*$ is a regular value so that $U \cong F \times [-\varepsilon,\varepsilon]$, and hence we can identify $\partial H \times \{x\}$ with $F \times \{x\}$ for all $x \in [-\varepsilon,\varepsilon]$. Call the resulting cobordism $W_0$, and let $M = \partial W_0 - Q$. Now since $F$ is dual to $w_1(Q)$, $(Q \times\{1\}) - U$ is orientable (see for example \cite[Lemma 2.2]{MR1171915}). Hence $M$ is orientable, since it is obtained from $Q - U$ by gluing on two 3-dimensional handlebodies. In particular, the $Pin^-$ structure on $M$ lifts to a $Spin$ structure, and $M$ then bounds a $Spin$ 4-manifold $W_1$ since $\Omega_3^{Spin}(*) = 0$. In fact, we can take $W_1$ to be simply connected \cite{MR539917}.

We can now define $V_0 = W_0 \cup_M W_1$, and we can extend $f$ from $Q$ to $V_0$ as follows. First, extend $f$ to $Q \times I$ by $f(x,y) = (f(x), y)$. Identifying $U = F \times [-\varepsilon,\varepsilon]$, we may assume (up to isotopy) that $f|_U$ is projection to the second coordinate, where we identify a neighborhood of the regular value $*$ with $[-\varepsilon,\varepsilon]$. We can then extend $f$ to $W_0$ by defining $f|_{H \times [-\varepsilon,\varepsilon]}$ as projection to the second coordinate. With this extension of $f$, we can see that $f(M)$ is contractible in $S^1$ since it does not contain $*$. Hence after an isotopy of $f$ in a collar neighborhood of $M$ in $W_0$ we can assume $f(M)$ is a single point. We can then extend $f$ to $V_0$ by taking $f(W_1)$ to be the same point. Now let $\gamma\colon S^1 \to V_0$ with $[f \circ \gamma] = 0 \in H_1(S^1;\mathbb{Z})$. Note that $\gamma$ is homologous to a curve $\gamma'$ in $Q$ by the construction of $V_0$, and hence $w_1(V_0)[\gamma] = w_1(Q)[\gamma']$. Since $[f\circ\gamma']$ is even in $H_1(S^1)$, $w_1(Q)[\gamma'] = 0$ and hence the normal bundle of $\gamma$ is orientable. 
 
\end{proof}

In the course of performing surgeries we will need to ensure that the surgery obstruction vanishes. In our case, this surgery obstruction lies in the $L$-group $L_4(\mathbb{Z}[\mathbb{Z}^-])$, where $\mathbb{Z}^-$ refers to the integers together with the non-trivial homomorphism $\mathbb{Z} \to \mathbb{Z}/2\mathbb{Z}$.

\begin{lemma}\cite[Section 13A]{MR1687388}\label{lemma:L-group}
The L-group $L_4(\mathbb{Z}[\mathbb{Z}^-])$ is isomorphic to $\mathbb{Z}/2\mathbb{Z}$, and is generated by the $E_8$ form.
\end{lemma}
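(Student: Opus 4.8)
The plan is to deduce this from Wall's calculation of the surgery obstruction groups of infinite cyclic groups \cite[Section 13A]{MR1687388}. I would organize the argument around two statements: (a) the class $[E_8]\in L_4(\mathbb{Z}[\mathbb{Z}^-])$ generates a subgroup isomorphic to $\mathbb{Z}/2\mathbb{Z}$; and (b) this subgroup is all of $L_4(\mathbb{Z}[\mathbb{Z}^-])$.

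The input for (a) is an observation about the involution. Recall that $L_4(\mathbb{Z}[\mathbb{Z}^-])$ is a Witt group of nonsingular $(+1)$-quadratic forms $(V,\lambda,\mu)$ over the commutative ring $\Lambda=\mathbb{Z}[t,t^{-1}]$ equipped with the involution $\overline{t}=-t^{-1}$ determined by the nontrivial orientation character. Since $\Lambda$ is commutative and $t\overline{t}=-1$, multiplication by the central unit $t$ is a $\Lambda$-linear isometry $(V,\lambda,\mu)\xrightarrow{\sim}(V,-\lambda,-\mu)$ for every such form. Hence $(V,\lambda,\mu)\oplus(V,\lambda,\mu)\cong(V,\lambda,\mu)\oplus(V,-\lambda,-\mu)$, which is hyperbolic; so every class in $L_4(\mathbb{Z}[\mathbb{Z}^-])$ has order dividing $2$. (In particular, the integral signature invariant which makes $L_4(\mathbb{Z})\cong\mathbb{Z}$ is killed under $\mathbb{Z}\hookrightarrow\Lambda$.) To see that $[E_8]$ is not itself zero, note that the augmentation $\Lambda\to\mathbb{Z}/2\mathbb{Z}$, $t\mapsto 1$, is a map of rings with involution, because the sign in $\overline{t}=-t^{-1}$ becomes invisible mod $2$. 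It therefore induces a homomorphism $L_4(\mathbb{Z}[\mathbb{Z}^-])\to L_4(\mathbb{Z}/2\mathbb{Z})\cong\mathbb{Z}/2\mathbb{Z}$, the Arf invariant of quadratic forms over the field with two elements. The mod $2$ reduction of the $E_8$ form has Arf invariant equal to its signature divided by $8$, namely $1$; so $[E_8]$ maps to $1$ and hence has order exactly $2$.

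For (b) I would use the transfer (Shaneson-type) exact sequence attached to the index-two subgroup $2\mathbb{Z}\subset\mathbb{Z}$. The orientation character restricts trivially to $2\mathbb{Z}$, so $L_4(\mathbb{Z}[2\mathbb{Z}])=L_4(\mathbb{Z}[\mathbb{Z}])\cong\mathbb{Z}$; identifying the relative terms (Wall's $LN$-groups) and the connecting maps then computes $L_4(\mathbb{Z}[\mathbb{Z}^-])$. This is exactly the bookkeeping carried out in \cite[Section 13A]{MR1687388}, and I expect it to be the main obstacle: in contrast to part (a), it is not a short formal manipulation but requires the genuine $L$-theory computation to certify that no Witt classes survive beyond the one detected by the mod $2$ Arf invariant --- equivalently, that the (now $2$-torsion) signature together with the mod $2$ Arf invariant form a complete set of invariants for $L_4(\mathbb{Z}[\mathbb{Z}^-])$. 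Combining (a) and (b) yields $L_4(\mathbb{Z}[\mathbb{Z}^-])\cong\mathbb{Z}/2\mathbb{Z}$, generated by $[E_8]$.
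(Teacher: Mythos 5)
Your part (a) is correct and is not in the paper's proof; it gives a clean, self-contained lower bound. Because $\Lambda=\mathbb{Z}[t,t^{-1}]$ is commutative and $t\overline{t}=-1$ under the involution $\overline{t}=-t^{-1}$, multiplication by $t$ really is an isometry $(V,\lambda,\mu)\to(V,-\lambda,-\mu)$, and the diagonal exhibits $(V,\lambda,\mu)\oplus(V,-\lambda,-\mu)$ as metabolic, so every class in $L_4(\mathbb{Z}[\mathbb{Z}^-])$ is $2$-torsion. The augmentation $t\mapsto 1$ is indeed a map of rings with involution into $\mathbb{Z}/2\mathbb{Z}$ (the sign disappears mod $2$), and the Arf invariant of the mod-$2$ reduction of the $E_8$ form equals $\sigma(E_8)/8=1$, so $[E_8]\neq 0$. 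Thus $L_4(\mathbb{Z}[\mathbb{Z}^-])$ contains a $\mathbb{Z}/2\mathbb{Z}$ subgroup generated by $[E_8]$.

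Part (b), however, is where the lemma actually lives, and you have not carried it out: you name an exact sequence and explicitly defer the identification of its terms as ``the main obstacle.'' Moreover, the sequence you chose --- the $LN$-group (Browder--Livesay type) sequence for the index-two subgroup $2\mathbb{Z}\subset\mathbb{Z}$ --- is the harder of the two natural choices and is not the one the paper uses. The paper instead invokes \cite[Theorem 12.6]{MR1687388}, a Wang/Shaneson-type sequence for passing from the trivial group to $\mathbb{Z}^-$:
\[
L_4(\mathbb{Z}[1])\overset{\cdot 2}{\longrightarrow}L_4(\mathbb{Z}[1])\longrightarrow L_4(\mathbb{Z}[\mathbb{Z}^-])\longrightarrow 0,
\]
where the terminal $0$ comes from $L_3(\mathbb{Z}[1])=0$ and the first map is multiplication by $2$. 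Since $L_4(\mathbb{Z}[1])\cong\mathbb{Z}$ generated by $E_8$, the cokernel is $\mathbb{Z}/2\mathbb{Z}$ generated by the image of $E_8$, and the lemma follows in one line. So the concrete gap in your proposal is the upper bound on the order of the group: you have the generator and the injectivity of $\mathbb{Z}/2\mathbb{Z}\hookrightarrow L_4(\mathbb{Z}[\mathbb{Z}^-])$, but not the surjectivity. The paper's particular choice of exact sequence, with $L_3(\mathbb{Z}[1])=0$ truncating it on the right and the map visibly equal to $\cdot 2$, is precisely what makes that missing step short rather than a genuine computation.
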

\begin{proof}
Let $\mathbb{Z}[1]$ indicate the group ring of the trivial group. By \cite[Theorem 12.6]{MR1687388}, there is an exact sequence
\[
L_4(\mathbb{Z}[1]) \overset{\cdot 2}{\longrightarrow} L_4(\mathbb{Z}[1]) \to L_4(\mathbb{Z}[\mathbb{Z}^-]) \to 0,
\]
where, $L_4(\mathbb{Z}[1]) \cong \mathbb{Z}$ generated by the $E_8$ form \cite[Theorem 13.A.1]{MR1687388}, and the first map is multiplication by 2. Hence $L_4(\mathbb{Z}[\mathbb{Z}^-]) \cong \mathbb{Z}/2\mathbb{Z}$, and is also generated by the $E_8$ form.
\end{proof}

The following lemma provides an equivariant attaching curve allowing us to extend a symmetry along a 2-handle attachment, which we use to construct an equivariant slice disk. 

\begin{lemma}\label{lemma:equivariantmeridian}
Let $(K,\rho)$ be a strongly negative amphichiral knot. Then there is a $\rho$-invariant meridian $\mu$ of $K$ with an $S^1[\theta] \times D^2[r,\phi]$ neighborhood where $\rho$ acts by 
\[
(\theta,r,\phi) \mapsto (\theta+\pi,r,-\phi), \mbox{ with } \theta,\phi \in [0,2\pi) \mbox{ and } r \in [0,1].
\]
Furthermore, this parametrization of the neighborhood of $\mu$ gives the Seifert framing in $S^3$. 
\end{lemma}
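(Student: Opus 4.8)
The plan is to produce the $\rho$-invariant meridian $\mu$ explicitly from the local picture of $\rho$ near a fixed point on $K$, and then track the induced parametrization out to a standard meridional circle. Since $\rho$ has exactly two fixed points, both lying on $K$, we may use the classical fact (cited in the introduction) that $\rho$ is conjugate to the antipodal map on $S^3$; pick a fixed point $p \in \mathrm{Fix}(\rho) \cap K$ and choose an equivariant coordinate chart $\mathbb{R}^3$ around $p$ in which $\rho$ acts by $(x,y,z) \mapsto (-x,-y,-z)$ and $K$ is, near $p$, the $z$-axis (this is possible because $\rho$ preserves $K$ and fixes $p$, so $d\rho_p$ preserves the tangent line to $K$ and acts by $-1$ on it; after a further equivariant isotopy we may straighten $K$ near $p$). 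The first step is to take a small round 2-sphere $S$ of radius $\epsilon$ centered at $p$: it is $\rho$-invariant, and $S \cap K$ consists of two points $\pm(0,0,\epsilon)$ swapped by $\rho$. The meridian $\mu$ will be (a pushoff of) the equator $\{z = 0\} \cap S$, which is a $\rho$-invariant circle linking $K$ once.

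Next I would set up the parametrization. On the equatorial circle $\mu = \{(x,y,0) : x^2+y^2 = \epsilon^2\}$ the map $\rho$ acts by $(x,y) \mapsto (-x,-y)$, i.e. rotation by $\pi$; writing $x + iy = \epsilon e^{i\theta}$ this is exactly $\theta \mapsto \theta + \pi$, matching the desired formula on the $\theta$ coordinate. For the normal $D^2$ directions, note that $\mu$ bounds the disk $\{z=0, x^2+y^2 \le \epsilon^2\}$ inside $S$ on one side and, more usefully, we take the normal bundle of $\mu$ in $S^3$ spanned by the $z$-direction and the radial direction in the $\{z=0\}$-plane; a point in the $D^2[r,\phi]$ fiber over $\theta$ can be written so that the $\phi$-coordinate combines the signed $z$-displacement with the radial displacement, and since $\rho$ fixes the radial direction but sends $z \mapsto -z$, it acts on the fiber by $\phi \mapsto -\phi$ (an orientation-reversing reflection of $D^2$), with $r$ fixed. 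This gives the stated action $(\theta, r, \phi) \mapsto (\theta + \pi, r, -\phi)$. The choice of which normal direction is "$\phi = 0$" corresponds to the two choices alluded to in Question~\ref{question:isotopicdisks}; either works.

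Finally I would verify the framing claim. The meridian $\mu$ bounds the flat disk $\Delta = \{z = 0\} \cap (\text{ball of radius }\epsilon)$, which meets $K$ transversely in the single point $p$; hence $\mu$ is an honest meridian of $K$ (links $K$ once) and is nullhomologous in the complement of a neighborhood of $K$ after removing that intersection, i.e. it bounds $\Delta$ punctured at $p$ in $S^3 \setminus \nu(K)$. The $\phi = \text{const}$ longitude of our parametrized solid-torus neighborhood $S^1[\theta]\times D^2[r,\phi]$ is, by construction, a parallel copy of $\mu$ pushed in the radial $\{z=0\}$-plane direction, which is tangent to $\Delta$; therefore it bounds a punctured copy of $\Delta$ disjoint from $\mu$, so it has linking number zero with $\mu$. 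By definition this is the Seifert (0-)framing, proving the last sentence.

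**Main obstacle.** The conceptual content is straightforward, so the real work is bookkeeping: choosing the equivariant chart near $p$ so that $K$ is genuinely straightened (this needs an equivariant isotopy, using that $\rho$ is locally linear at $p$ and preserves $K$), and then pinning down the identification of the normal $D^2$ fibers with $D^2[r,\phi]$ so that the $\rho$-action comes out as the clean reflection $\phi \mapsto -\phi$ rather than some conjugate of it, while simultaneously checking that this same identification yields the Seifert framing and not some other framing. Getting these two requirements — the normalized $\rho$-action and the Seifert framing — to hold for a single parametrization is the one point that needs care.
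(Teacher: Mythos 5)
Your proposal is correct and follows essentially the same route as the paper's proof: conjugate $\rho$ to the standard point reflection so that near a fixed point on $K$ the action is $v \mapsto -v$ in a chart where $K$ is a coordinate axis, then take the unit circle in the orthogonal plane as $\mu$. The paper states the resulting parametrization and framing only implicitly ("under an appropriate parametrization"), so the extra detail you supply on the $(\theta,r,\phi)$ coordinates and the linking-number computation fills in rather than diverges from the intended argument.
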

\begin{proof}
Let $x$ be a fixed point of $\rho$ on $K$. Then there is a neighborhood of $x$ which is homeomorphic to $\mathbb{R}^3$ with the action $(x_1,x_2,x_3) \mapsto (-x_1,-x_2,-x_3)$, where $K$ is the $x_1$-axis and $x$ is the origin (see \cite{MR111044} and \cite{MR155323}). We can then take 
\[
\mu = \{x_1 = 0, x_2^2+x_3^2 = 1\},
\]
which has an $S^1 \times D^2$ neighborhood with the desired $\rho$ action under an appropriate parametrization.
\end{proof}
Apart from guaranteeing that the Arf invariant vanishes (see Lemma \ref{lemma:pinbordismclass}), the vanishing Alexander polynomial is also used in the following lemma, which will guarantee that a certain quadratic form is non-degenerate.
\begin{lemma} \label{lemma:Qhomologyvanishes}
If $\Delta_K(t) = 1$, then $H_1(\widetilde{Q};\mathbb{Z}) \cong 0$, where $\widetilde{Q}$ is the infinite cyclic cover of $Q$.
\end{lemma}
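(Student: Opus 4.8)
The plan is to relate $H_1(\widetilde{Q};\mathbb{Z})$ to the Alexander module of $K$ via the double cover $S^3_0(K) \to Q$. Recall from Lemma \ref{lemma:0involution} that $\rho$ acts freely on $S^3_0(K)$ with quotient $Q$, and that $H^1(Q;\mathbb{Z}) = \mathbb{Z}$. Let $\widetilde{Q}$ be the infinite cyclic cover corresponding to a generator of $H^1(Q;\mathbb{Z}) \cong \mathrm{Hom}(\pi_1(Q),\mathbb{Z})$. I would first identify the pullback of $\widetilde{Q}$ along $S^3_0(K) \to Q$. Since the composite $\pi_1(S^3_0(K)) \to \pi_1(Q) \to \mathbb{Z}$ is (up to sign and index) the abelianization map $\pi_1(S^3_0(K)) \to H_1(S^3_0(K);\mathbb{Z}) \cong \mathbb{Z}$ — here one uses that the meridian of $K$ generates $H_1$ of the $0$-surgery — the corresponding infinite cyclic cover of $S^3_0(K)$ is the familiar one $\widetilde{S^3_0(K)}$, obtained by gluing $\mathbb{Z}$ copies of the cut-open knot exterior and filling in. Its first homology is well known: $H_1(\widetilde{S^3_0(K)};\mathbb{Z})$ is the Alexander module of $K$ (the $0$-framed surgery adds no new $H_1$ to the infinite cyclic cover of the exterior, since the capping solid torus lifts to copies of $D^2 \times \mathbb{R}$ glued along longitudes, which are already nullhomologous in the infinite cyclic cover). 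Thus $\Delta_K(t) = 1$ gives $H_1(\widetilde{S^3_0(K)};\mathbb{Z}) = 0$.

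Next I would compare $\widetilde{S^3_0(K)}$ with $\widetilde{Q}$. There are two cases depending on how the free $\mathbb{Z}/2$-action interacts with the infinite cyclic cover. Writing $G = \pi_1(Q)$ and $N = \pi_1(S^3_0(K)) \trianglelefteq G$ with $G/N \cong \mathbb{Z}/2$, and letting $\phi\colon G \to \mathbb{Z}$ be our chosen class, the relevant question is whether $\phi|_N$ is still surjective (equivalently whether $\phi(G) = \mathbb{Z}$ has the image of $N$ equal to all of $\mathbb{Z}$ or to $2\mathbb{Z}$). In the first case, $\widetilde{Q}$ is covered by $\widetilde{S^3_0(K)}$ with deck group $\mathbb{Z}/2$ acting freely and commuting with the $\mathbb{Z}$-action; in the second, $\widetilde{Q}$ is itself a $\mathbb{Z}$-cover sitting in a tower over $\widetilde{S^3_0(K)}$. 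Either way, I would argue that $H_1(\widetilde{S^3_0(K)};\mathbb{Z}) = 0$ forces $H_1(\widetilde{Q};\mathbb{Z}) = 0$. In the first case this is a transfer argument: $\widetilde{S^3_0(K)} \to \widetilde{Q}$ is a double cover, so $H_1(\widetilde{Q};\mathbb{Z})$ is annihilated by $2$ after transfer, and a direct chain-level argument (or the Wang/Cartan–Leray spectral sequence $H_p(\mathbb{Z}/2; H_q(\widetilde{S^3_0(K)})) \Rightarrow H_{p+q}(\widetilde{Q})$) shows $H_1(\widetilde{Q};\mathbb{Z})$ is built from $H_1$ and $H_0$ of the (now acyclic-in-degree-$1$) cover; since $\widetilde{S^3_0(K)}$ is connected with $H_1 = 0$, one gets $H_1(\widetilde{Q};\mathbb{Z}) \cong H_1(\mathbb{Z}/2;\mathbb{Z}) = 0$. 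In the second case $\widetilde{Q}$ lies between $Q$ and $\widetilde{S^3_0(K)}$ only after a further quotient, and a nearly identical transfer/spectral sequence argument applies.

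The step I expect to be the main obstacle is pinning down exactly which of the two cases occurs and making the homological comparison precise, including base-point and connectedness bookkeeping for the various covers (one must be careful that the covers in question are connected, which comes down to checking surjectivity of the composites of fundamental-group maps). A clean way to sidestep some of this is to argue at the level of $\mathbb{Z}[t^{\pm 1}]$-modules: $H_1$ of the relevant covers are finitely generated modules over a PID-localization, and $\Delta_K(t) = 1$ says the $S^3_0(K)$ module has trivial order; then one shows the $Q$-module is a subquotient (via transfer) of something built from it, hence also has trivial order, hence vanishes since it is finitely generated torsion with trivial order ideal. I would also double-check the identification of $H_1$ of the infinite cyclic cover of the $0$-surgery with the Alexander module — this is standard (e.g.\ via the Mayer–Vietoris sequence for $S^3_0(K) = (S^3 \setminus K) \cup_{T^2} (S^1 \times D^2)$ lifted to infinite cyclic covers), but it is where the precise role of the $0$-framing enters, so it deserves an explicit sentence.
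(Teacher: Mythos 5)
Your reduction to $H_1$ of the infinite cyclic cover of $S^3_0(K)$ and the observation that this vanishes when $\Delta_K(t)=1$ is exactly the content of the paper's proof, which cites Hartley \cite{MR433474} for the identification of $\widetilde{Q}$ with $\widetilde{S^3_0}(K)$ and Milnor \cite{MR167976} for the vanishing. However, you leave open ``which of the two cases occurs,'' and that is not a detail you can defer: only the second case is consistent with the lemma, and it is the one that actually holds. Here is why. The class $\phi$ generates $H^1(Q;\mathbb{Z})\cong\mathbb{Z}$, and $H^1(Q;\mathbb{Z}/2\mathbb{Z})\cong\mathbb{Z}/2\mathbb{Z}$; the Bockstein sequence (using that the $2$-torsion of $H^2(Q;\mathbb{Z})$ vanishes, as forced by $H^1(Q;\mathbb{Z}/2\mathbb{Z})\cong\mathbb{Z}/2\mathbb{Z}$) shows the mod-$2$ reduction of $\phi$ is the unique nonzero class. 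Since $\rho$ is orientation-reversing, $S^3_0(K)\to Q$ is the orientation double cover, so that unique nonzero class is $w_1(Q)$. Hence $\phi\bmod 2=w_1(Q)$ and $\phi$ restricted to $\pi_1(S^3_0(K))=\ker w_1$ has image $2\mathbb{Z}$. Since $\phi|_{\pi_1(S^3_0(K))}$ factors through abelianization, $\ker\phi$ equals $\ker(\pi_1(S^3_0(K))\to H_1(S^3_0(K)))$, which gives a \emph{homeomorphism} $\widetilde{Q}\cong\widetilde{S^3_0}(K)$ — your description of case two as ``a tower'' or ``a further quotient'' muddies this; they are the same space.

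There is also a concrete error in your case-one analysis. If $\widetilde{S^3_0}(K)\to\widetilde{Q}$ were a connected free double cover with $H_1(\widetilde{S^3_0}(K);\mathbb{Z})=0$, the Cartan--Leray spectral sequence $H_p(\mathbb{Z}/2;H_q(\widetilde{S^3_0}(K)))\Rightarrow H_{p+q}(\widetilde{Q})$ would give $E^2_{1,0}=H_1(\mathbb{Z}/2;\mathbb{Z})\cong\mathbb{Z}/2\mathbb{Z}$ with no differentials hitting it, hence $H_1(\widetilde{Q};\mathbb{Z})\cong\mathbb{Z}/2\mathbb{Z}$, not $0$ as you wrote. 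Similarly, the transfer only shows $H_1(\widetilde{Q};\mathbb{Z})$ is $2$-torsion, not trivial. So case one would in fact contradict the lemma; the proof has to rule it out, which the argument above does. With that fix, your approach becomes a self-contained version of what the paper gets by citation.
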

\begin{proof}
By \cite{MR433474}, $\widetilde{Q}$ is homeomorphic to the infinite cyclic cover $\widetilde{S^3_0}(K)$ of $S^3_0(K)$, and by \cite{MR167976},
\[
H_1(\widetilde{S^3_0}(K);\mathbb{Z}) = 0
\]
when $\Delta_K(t) = 1$.
\end{proof}

We can now prove Theorem \ref{thm:alex1}.

\begin{proof}[Proof of Theorem \ref{thm:alex1}]
We will build a compact 4-manifold $W$ with boundary $S^3_0(K)$, along with an involution on $W$ which restricts to the involution on $S^3_0(K)$ induced by $\rho$ (see Lemma \ref{lemma:0involution}). Furthermore, we will guarantee that $W$ is a homotopy $S^1$ and that $H_1(W;\mathbb{Z})$ is generated by a meridian $\mu$ of $K$. Once we have constructed $W$, gluing a 2-handle $h = D^2 \times D^2$ to $W$ along $\mu$ then produces a topological 4-ball in which the co-core of $h$ is a slice disk for $K$. Moreover, $h$ can be attached equivariantly with respect to $\rho$ so that $K$ is equivariantly topologically slice. Indeed, we attach $h$ to the $\rho$-invariant neighborhood of $\mu$ as described in Lemma \ref{lemma:equivariantmeridian}, and we equip $h = D^2[s,\theta] \times D^2[r,\phi]$ with the involution given by $\rho(s,\theta,r,\phi) = (s,\theta+\pi,r,-\phi)$, which restricts to the involution in Lemma \ref{lemma:equivariantmeridian} as desired. (Observe that the fixed-point set is the diameter $(0,0,r,0) \cup (0,0,r,\pi)$ of the co-core of $h$.)

To build $W$, we first consider $Q = S^3_0(K)/\rho$. We will show that $Q$ bounds a (non-orientable) 4-manifold $V$, which is a homotopy $S^1$ and for which $H_1(V;\mathbb{Z})$ is generated by a circle $\gamma$ which lifts to a meridian $\mu$ of $K$. In fact, $W$ is the orientation cover of $V$ and hence also a homotopy $S^1$, and the deck transformation involution is the desired involution.

The rest of the proof consists of the construction of $V$. Since $\Delta_K(t) = 1$, we have $Arf(K) = 0$ so that Lemma \ref{lemma:pinbordismclass} produces a $Pin^-$ 4-manifold $V_0$ with boundary $Q$ and a map $f_0\colon V_0 \to S^1$ which restricts to $f\colon Q \to S^1$, where the homotopy class of $f$ corresponds to a generator of $H^1(Q;\mathbb{Z})$. First, we will perform surgeries in the interior of $V_0$ to obtain a 4-manifold $V_1$ so that the following diagram commutes and $f_1$ induces an isomorphism $\pi_1(V_1) \to \mathbb{Z}$. 

\[ \begin{tikzcd}
Q \arrow{r}{\iota} \arrow{dr}{f}& V_1 \arrow{d}{f_1} \\
 & S^1
\end{tikzcd}
\]
To do so, consider a finite presentation for the kernel of $(f_0)_*\colon \pi_1(V_0) \to \pi_1(S^1)$. By Lemma \ref{lemma:pinbordismclass}, the generators of ker$((f_0)_*)$ can be represented by framed (and embedded) circles since they have orientable normal bundles. Performing surgery on each of these circles then produces $V_1$. Here we choose the framings so that $V_1$ inherits a $Pin^-$ structure from $V_0$. 

The existence of this $Pin^-$ structure on $V_1$ implies that $w_2(V_1) + w_1^2(V_1) = 0$. Since $w_2(V_1) + w_1^2(V_1)$ is the second Wu class, we have that for any immersed sphere $S$ representing an element of $\pi_2(V_1)$, the self-intersection pairing $\langle [S] \cup [S],[V_1,Q] \rangle \equiv 0$ (mod $2$). By introducing local kinks, each of which changes the Euler number by $\pm 2$, we may further choose a representative immersed sphere $S$ with trivial normal bundle for each class in $\pi_2(V_1)$. Here $S$ is unique up to regular homotopy \cite[Theorem 8.3]{MR119214}. We can then use these Euler number 0 representatives to define an intersection pairing $\lambda\colon\pi_2(V_1) \times \pi_2(V_1) \to \mathbb{Z}[\pi_1(V_1)]$ as in \cite[Definition 11.2]{DISCEMBEDDING} (with arbitrarily chosen whiskers).

We will show that this intersection pairing $\lambda$ is non-degenerate so that $\lambda$, together with its quadratic enhancement $s$ given by the self-intersection number, represents an element $(\pi_2(V_1),\lambda,s)$ in the surgery obstruction group $L_4(\mathbb{Z}[\pi_1(V_1)]^w)$, where $w$ is the orientation character. To see that $\lambda$ is non-degenerate, note that $\lambda$ is isomorphic to the $\mathbb{Z}[\pi_1(V_1)]$-equivariant intersection form 
\[
\lambda\colon H_2(V_1;\mathbb{Z}[\pi_1(V_1)]) \times H_2(V_1;\mathbb{Z}[\pi_1(V_1)]^w) \to \mathbb{Z}[\pi_1(V_1)],
\]
and that we have the following exact sequence:
\begin{equation}\label{eqn:V1Qexact}
\dots \to H_2(V_1;\mathbb{Z}[\pi_1(V_1)]^w) \overset{p}{\to} H_2(V_1,Q;\mathbb{Z}[\pi_1(V_1)]^w) \to H_1(Q;\mathbb{Z}[\pi_1(V_1)]^w) \to \cdots
\end{equation}
Noting that $H_1(Q;\mathbb{Z}[\pi_1(V_1)]^w) \cong H_1(\widetilde{Q};\mathbb{Z})$ as abelian groups, Lemma \ref{lemma:Qhomologyvanishes} gives an isomorphism 
\[
H_1(Q;\mathbb{Z}[\pi_1(V_1)]^w) \cong 0,
\]
so that $p$ in Equation (\ref{eqn:V1Qexact}) is surjective. Now for any class $\alpha$ in $H_2(V_1;\mathbb{Z}[\pi_1(V_1)]^w)$, Poincar\'{e} duality produces a class in $H_2(V_1,Q;\mathbb{Z}[\pi_1(V_1)]^w)$ which pairs non-trivially with $\alpha$. Hence $\lambda$ is non-degenerate and $(\pi_2(V_1),\lambda,s) \in L_4(\mathbb{Z}[\pi_1(V_1)]^w)$.

Now observe that $\pi_1(V_1) \cong \mathbb{Z}$ and the orientation character $w: \pi_1(V_1) \to \mathbb{Z}/2\mathbb{Z}$ is surjective since $V_1$ is non-orientable. We may assume that the surgery obstruction $(\pi_2(V_1),\lambda,s)$ is trivial in $L_4(\mathbb{Z}[\pi_1(V_1)]^w) \cong L_4(\mathbb{Z}[\mathbb{Z}^-])$; if it is not, then by Lemma \ref{lemma:L-group} we can replace $V_1$ with its connected sum with the $E_8$ manifold $V_1 \# E_8$ to obtain a vanishing surgery obstruction.

Now since $(\pi_2(V_1),\lambda,s)$ is trivial in $L_4(\mathbb{Z}[\pi_1(V_1)]^w)$, we have a stabilization 
\[
V_2 = V_1 \# (S^2 \times S^2)^k
\] 
for which $(\pi_2(V_2),\lambda,s)$ is hyperbolic. Along with the fact that $\mathbb{Z}$ is a good group, we can then apply the sphere embedding theorem (see for example \cite[Theorem 5.1A]{MR1201584} or \cite[Chapter 20.3]{DISCEMBEDDING}). The result is that $\pi_2(V_2)$ has a basis of framed immersed spheres
\[
\{S_1,\dots,S_n,S_1',\dots,S_n'\},
\]
where the spheres $S_1,\dots,S_n$ are embedded and pairwise disjoint, and for each $i$ the sphere $S_i'$ is geometrically dual to $S_i$. We can then perform surgery on $S_1,\dots,S_n$ to obtain a 4-manifold $V$. Note that $V$ is a homotopy $S^1$, and the generator of $H_1(V;\mathbb{Z})$ lifts to a meridian of $K$ by construction.

\end{proof}

\bibliography{bibliography}

\newcommand{\etalchar}[1]{$^{#1}$}
\begin{thebibliography}{BKK{\etalchar{+}}21}

\bibitem[BC22]{boyle2022negative}
Keegan Boyle and Wenzhao Chen.
\newblock Negative amphichiral knots and the half-conway polynomial.
\newblock 2022.
\newblock \url{https://arxiv.org/abs/2206.03598}.

\bibitem[BI21a]{BoyleIssa2}
Keegan Boyle and Ahmad Issa.
\newblock Equivariant 4-genera of strongly invertible and periodic knots, 2021.
\newblock \url{https://arxiv.org/abs/2101.05413}.

\bibitem[BI21b]{BoyleIssa}
Keegan Boyle and Ahmad Issa.
\newblock Equivariantly slicing strongly negative amphichiral knots, 2021.
\newblock \url{https://arxiv.org/abs/2109.01198}.

\bibitem[BKK{\etalchar{+}}21]{DISCEMBEDDING}
Stefan Behrens, Boldizsar Kalmar, Min~Hoon Kim, Mark Powell, and Arunima Ray,
  editors.
\newblock {\em The Disc Embedding Theorem}.
\newblock Oxford University Press, July 2021.

\bibitem[CDGW]{SnapPy}
Marc Culler, Nathan~M. Dunfield, Matthias Goerner, and Jeffrey~R. Weeks.
\newblock Snap{P}y, a computer program for studying the geometry and topology
  of $3$-manifolds.
\newblock Available at \url{http://snappy.computop.org} (27/04/2022).

\bibitem[CP19]{conway2019enumerating}
Anthony Conway and Mark Powell.
\newblock Characterisation of homotopy ribbon discs.
\newblock 2019.
\newblock \url{https://arxiv.org/abs/1902.05321}.

\bibitem[Cro64]{MR167976}
R.~H. Crowell.
\newblock The annihilator of a knot module.
\newblock {\em Proc. Amer. Math. Soc.}, 15:696--700, 1964.

\bibitem[DMS22]{dai2022equivariant}
Irving Dai, Abhishek Mallick, and Matthew Stoffregen.
\newblock Equivariant knots and knot floer homology.
\newblock 2022.
\newblock \url{https://arxiv.org/abs/2201.01875}.

\bibitem[Fel16]{MR3523068}
Peter Feller.
\newblock The degree of the {A}lexander polynomial is an upper bound for the
  topological slice genus.
\newblock {\em Geom. Topol.}, 20(3):1763--1771, 2016.

\bibitem[FQ90]{MR1201584}
Michael~H. Freedman and Frank Quinn.
\newblock {\em Topology of 4-manifolds}, volume~39 of {\em Princeton
  Mathematical Series}.
\newblock Princeton University Press, Princeton, NJ, 1990.

\bibitem[Fre84]{MR804721}
Michael~H. Freedman.
\newblock The disk theorem for four-dimensional manifolds.
\newblock In {\em Proceedings of the {I}nternational {C}ongress of
  {M}athematicians, {V}ol. 1, 2 ({W}arsaw, 1983)}, pages 647--663. PWN, Warsaw,
  1984.

\bibitem[FT17]{FT}
Problem list.
\newblock In {\em Thirty Years of {F}loer Theory for 3-manifolds}. CMO, 2017.
\newblock
  \url{https://www.birs.ca/cmo-workshops/2017/17w5011/report17w5011.pdf}.

\bibitem[Gom85]{MR816673}
Robert~E. Gompf.
\newblock An infinite set of exotic {${\bf R}^4$}'s.
\newblock {\em J. Differential Geom.}, 21(2):283--300, 1985.

\bibitem[GT04]{MR2153483}
Stavros Garoufalidis and Peter Teichner.
\newblock On knots with trivial {A}lexander polynomial.
\newblock {\em J. Differential Geom.}, 67(1):167--193, 2004.

\bibitem[Hay20]{Haydenexotic}
Kyle Hayden.
\newblock Exotically knotted disks and complex curves, 2020.
\newblock \url{https://arxiv.org/abs/2003.13681}.

\bibitem[Hir59]{MR119214}
Morris~W. Hirsch.
\newblock Immersions of manifolds.
\newblock {\em Trans. Amer. Math. Soc.}, 93:242--276, 1959.

\bibitem[HS59]{MR111044}
Morris~W. Hirsch and Stephen Smale.
\newblock On involutions of the {$3$}-sphere.
\newblock {\em Amer. J. Math.}, 81:893--900, 1959.

\bibitem[Kap79]{MR539917}
Steve~J. Kaplan.
\newblock Constructing framed {$4$}-manifolds with given almost framed
  boundaries.
\newblock {\em Trans. Amer. Math. Soc.}, 254:237--263, 1979.

\bibitem[Kaw76]{MR433474}
Akio Kawauchi.
\newblock {$ H$}-cobordism. {I}. {T}he groups among three dimensional homology
  handles.
\newblock {\em Osaka Math. J.}, 13(3):567--590, 1976.

\bibitem[Kaw09]{MR2554510}
Akio Kawauchi.
\newblock Rational-slice knots via strongly negative-amphicheiral knots.
\newblock {\em Commun. Math. Res.}, 25(2):177--192, 2009.

\bibitem[KT90]{MR1171915}
R.~C. Kirby and L.~R. Taylor.
\newblock {${\rm Pin}$} structures on low-dimensional manifolds.
\newblock In {\em Geometry of low-dimensional manifolds, 2 ({D}urham, 1989)},
  volume 151 of {\em London Math. Soc. Lecture Note Ser.}, pages 177--242.
  Cambridge Univ. Press, Cambridge, 1990.

\bibitem[Liv63]{MR155323}
G.~R. Livesay.
\newblock Involutions with two fixed points on the three-sphere.
\newblock {\em Ann. of Math. (2)}, 78:582--593, 1963.

\bibitem[L{\"u}c02]{MR1937016}
Wolfgang L{\"u}ck.
\newblock A basic introduction to surgery theory.
\newblock In {\em Topology of high-dimensional manifolds, {N}o. 1, 2
  ({T}rieste, 2001)}, volume~9 of {\em ICTP Lect. Notes}, pages 1--224. Abdus
  Salam Int. Cent. Theoret. Phys., Trieste, 2002.

\bibitem[Ran02]{MR2061749}
Andrew Ranicki.
\newblock {\em Algebraic and geometric surgery}.
\newblock Oxford Mathematical Monographs. The Clarendon Press, Oxford
  University Press, Oxford, 2002.
\newblock Oxford Science Publications.

\bibitem[Swe]{KLO}
Frank Swenton.
\newblock {K}{L}{O} ({K}not-{L}ike {O}bjects) software.
\newblock Available at \url{https://www.klo-software.net} (27/04/2022).

\bibitem[Swi75]{MR0385836}
Robert~M. Switzer.
\newblock {\em Algebraic topology---homotopy and homology}.
\newblock Die Grundlehren der mathematischen Wissenschaften, Band 212.
  Springer-Verlag, New York-Heidelberg, 1975.

\bibitem[VB83]{MR715764}
James~M. Van~Buskirk.
\newblock A class of negative-amphicheiral knots and their {A}lexander
  polynomials.
\newblock {\em Rocky Mountain J. Math.}, 13(3):413--422, 1983.

\bibitem[Wal99]{MR1687388}
C.~T.~C. Wall.
\newblock {\em Surgery on compact manifolds}, volume~69 of {\em Mathematical
  Surveys and Monographs}.
\newblock American Mathematical Society, Providence, RI, second edition, 1999.
\newblock Edited and with a foreword by A. A. Ranicki.

\end{thebibliography}
\bibliographystyle{alpha}
\end{document}